\newtheorem{theorem}{Theorem}[section]
\newtheorem{lemma}{Lemma}[section]
\newtheorem{corollary}{Corollary}[section]
\theoremstyle{definition}
\newtheorem{definition}{Definition}[section]
\newtheorem{example}{Example}[section]
\title{New upper bound for the cardinalities of $s$-distance sets on the unit sphere}
\author{Hiroshi Nozaki}
\date{November, 2008}
\begin{document}
\maketitle
\begin{abstract}
We have the Fisher type inequality and the linear programming bound as upper bounds for the cardinalities of $s$-distance sets on $S^{d-1}$. In this paper, we give a new upper bound for the cardinalities of $s$-distance sets on $S^{d-1}$ for any $s$. This upper bound improves the Fisher typer inequality and is useful for $s$-distance sets which are not applicable to the linear programming bound.     
\end{abstract}
\section{Introduction}
Let $X$ be a finite set on $S^{d-1}$. We define 
$$
A(X):=\{(x, y) \mid x,y \in X, x\ne y\},
$$
where $(\ast,\ast)$ is the standard inner product. $X$ is called an $s$-distance set, if the number of Euclidean distances between any distinct vectors in $X$ is $s$, that is, $|A(X)| = s$. The following upper bound is well known. 
\begin{theorem}[Fisher type inequality \cite{DGS}] 
\begin{enumerate}
\item Let $X$ be an $s$-distance set on $S^{d-1}$. Then, $|X| \leq \binom{d+s-1}{s}+\binom{d+s-2}{s-1}$. 
\item Let $X$ be an antipodal $s$-distance set on $S^{d-1}$. Then, $|X| \leq 2 \binom{d+s-2}{s-1}$. 
\end{enumerate}
\end{theorem}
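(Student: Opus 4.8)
The plan is to use the standard linear-algebra (polynomial) method: to each point of $X$ I attach a polynomial function on $S^{d-1}$ so that these functions are linearly independent, and then bound $|X|$ by the dimension of the ambient function space. Write $A(X)=\{\alpha_1,\dots,\alpha_s\}$ and $X=\{x_1,\dots,x_N\}$. For each $i$ I would define the annihilator
\[
F_i(x)=\prod_{k=1}^{s}\frac{(x,x_i)-\alpha_k}{1-\alpha_k}.
\]
Since $(x_i,x_i)=1$ while $(x_j,x_i)\in A(X)$ for every $j\ne i$, this gives $F_i(x_j)=\delta_{ij}$. The delta property immediately forces linear independence: evaluating a vanishing combination $\sum_i c_iF_i=0$ at $x_j$ yields $c_j=0$. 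Each $F_i$ is the restriction to $S^{d-1}$ of a polynomial of degree at most $s$, so all of them lie in the space $\mathrm{Pol}_{\le s}(S^{d-1})$ of such restrictions, and hence $N\le\dim\mathrm{Pol}_{\le s}(S^{d-1})$.

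It remains to identify this dimension, which is the computational heart of part (1). Using the harmonic decomposition $\mathrm{Pol}_{\le s}(S^{d-1})=\bigoplus_{k=0}^{s}\mathrm{Harm}_k$ together with $\dim\mathrm{Harm}_k=\binom{d+k-1}{k}-\binom{d+k-3}{k-2}$, the sum $\sum_{k=0}^{s}\dim\mathrm{Harm}_k$ telescopes: each negative term at index $k$ cancels the leading positive term at index $k-2$, so only the two top-degree positive terms survive, giving
\[
\dim\mathrm{Pol}_{\le s}(S^{d-1})=\binom{d+s-1}{s}+\binom{d+s-2}{s-1},
\]
which is exactly the claimed bound.

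For the antipodal case I would exploit the symmetry $X=-X$. Because $-x\in X$ whenever $x\in X$, the value $-1$ lies in $A(X)$, and the remaining inner products are invariant under $\alpha\mapsto-\alpha$; thus $A(X)=\{-1\}\cup\{\pm\beta_1,\dots,\pm\beta_m\}$, possibly together with $0$. Choosing one point from each antipodal pair gives $Y=\{y_1,\dots,y_{N/2}\}$, and I would build annihilators in the squared inner product. When $0\notin A(X)$ (so $s=2m+1$), the functions $G_i(x)=\prod_{k=1}^{m}\frac{(x,y_i)^2-\beta_k^2}{1-\beta_k^2}$ are even polynomials of degree $s-1$ satisfying $G_i(y_j)=\delta_{ij}$; when $0\in A(X)$ (so $s=2m+2$), the extra factor $(x,y_i)$ produces odd polynomials $H_i$ of the same degree $s-1$ with the analogous delta property. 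In either case the relevant span is the even, respectively odd, part of $\mathrm{Pol}_{\le s-1}(S^{d-1})$, whose dimension again telescopes to $\binom{d+s-2}{s-1}$, so $N/2\le\binom{d+s-2}{s-1}$ and $|X|\le 2\binom{d+s-2}{s-1}$.

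The linear independence is immediate from the delta property, so the genuine work is the dimension bookkeeping: correctly assembling the harmonic decomposition and carrying out the telescoping identities. In the antipodal case the delicate point is the parity case-split — recognizing that the presence or absence of the inner product $0$ dictates whether even or odd polynomials give the sharp space, and checking that in both cases the degree is exactly $s-1$ so that the same binomial $\binom{d+s-2}{s-1}$ emerges. I expect this parity analysis, rather than the construction of the annihilators, to be the main obstacle.
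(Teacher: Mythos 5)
Your proof is correct, but it takes a genuinely different route from the paper. The paper never proves the Fisher type inequality itself --- it quotes it from Delsarte--Goethals--Seidel --- and inside the paper it appears only as the weakest case of Theorems \ref{main} and \ref{main anti}: since $\sum_{i \text{ with } f_i>0} h_i \le \sum_{i=0}^{s} h_i$ and the right-hand side telescopes to $\binom{d+s-1}{s}+\binom{d+s-2}{s-1}$, the main theorem always implies the Fisher bound. The paper's mechanism for that is matrix-theoretic: expand $F_X(t)=\sum_{i} f_i G_i^{(d)}(t)$ in Gegenbauer polynomials, use the addition formula to factor the identity matrix $I_{|X|}$ through the characteristic matrices $H_l$, and apply the signature Lemma \ref{key lemma} to bound $|X|$ by the number of positive diagonal entries. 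Your argument is instead the classical interpolation proof: annihilator polynomials with $F_i(x_j)=\delta_{ij}$, hence linear independence, hence $|X|\le\dim\mathrm{Pol}_{\le s}(S^{d-1})=\sum_{k=0}^{s}h_k$, plus the same telescoping identity; in the antipodal case you correctly observe that $-1\in A(X)$, that $A(X)\setminus\{-1\}$ is closed under negation, and that the parity split ($0\notin A(X)$: even annihilators of degree $s-1$; $0\in A(X)$: odd annihilators of degree $s-1$) confines everything to the even, respectively odd, part of $\mathrm{Pol}_{\le s-1}(S^{d-1})$, of dimension $\binom{d+s-2}{s-1}$ in both cases. What your route buys is elementarity and self-containedness: no Gegenbauer expansion, no addition formula, no eigenvalue counting --- only linear independence and the harmonic dimension bookkeeping, which both proofs share. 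What the paper's heavier route buys is precisely its refinement: the matrix identity records which harmonic components $\mathrm{Harm}_i$ carry positive coefficients $f_i$, so the bound may discard every $h_i$ with $f_i\le 0$ --- a saving that is invisible to the interpolation argument, since it uses only the degree of the annihilators and not the Gegenbauer expansion of $F_X$.
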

$X$ is said to be antipodal, if $-X=\{-x \mid x \in X \} \subset X$. We prepare the Gegenbauer polynomial. 
\begin{definition}
Gegenbauer polynomials are a set of
orthogonal polynomials $\{G_l^{(d)}(t) \mid l=1,2,\ldots \}$ of one variable $t$.
For each $l$, $G_l^{(d)}(t)$ is a 
polynomial of degree $l$ and defined in the following manner.
\begin{enumerate}
\item $G_0^{(d)}(t) \equiv 1$, $G_1^{(d)}(t)=d t$.
\item $t G_l^{(d)}(t)=\lambda_{l+1}G_{l+1}^{(d)}(t)+(1-\lambda_{l-1})G_{l-1}^{(d)}(t)$ for $l \geq 1$, where $\lambda_l=\frac{l}{d+2l-2}$.
\end{enumerate}
\end{definition}
The following upper bound for the cardinalities of $s$-distance sets is well known. 
\begin{theorem}[Linear programming bound \cite{DGS}]
Let $X$ be an $s$-distance set on $S^{d-1}$. We define the polynomial $F_X(t):=\prod_{\alpha \in A(X)}(t-\alpha)$ for $X$. 
We have the Gegenbauer expansion 
$$
F_X(t)=\prod_{\alpha \in A(X)}(t-\alpha)= \sum_{k=0}^s f_k G_k^{(d)}(t),
$$
where $f_k$ are real numbers. If $f_0>0$ and $f_i\geq 0$ for all $1 \leq i \leq s$, then
$$
|X| \leq \frac{F_X(1)}{f_0}.
$$
\end{theorem}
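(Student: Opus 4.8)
The plan is to evaluate the double sum $\sum_{x,y\in X} F_X((x,y))$ in two different ways and compare the two outcomes. This two-way counting is the heart of the Delsarte--Goethals--Seidel linear programming method, and the positivity hypotheses on the coefficients $f_k$ are exactly what is needed to turn the comparison into the stated inequality.

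First I would compute the sum directly from the definition $F_X(t)=\prod_{\alpha\in A(X)}(t-\alpha)$. Splitting the index set into the diagonal pairs $x=y$ and the off-diagonal pairs $x\neq y$: since every point lies on $S^{d-1}$ we have $(x,x)=1$, so the diagonal contributes $|X|\,F_X(1)$; while for $x\neq y$ the inner product $(x,y)$ equals some $\alpha\in A(X)$, and then the factor $(\alpha-\alpha)$ forces $F_X((x,y))=0$. Hence
$$\sum_{x,y\in X} F_X((x,y)) = |X|\,F_X(1).$$

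Next I would substitute the Gegenbauer expansion $F_X(t)=\sum_{k=0}^s f_k G_k^{(d)}(t)$ and interchange the order of summation, obtaining $\sum_{k=0}^s f_k \sum_{x,y\in X} G_k^{(d)}((x,y))$. The key ingredient, and the step I expect to be the main obstacle, is the positive-semidefiniteness of the Gegenbauer polynomials:
$$\sum_{x,y\in X} G_k^{(d)}((x,y)) \geq 0 \quad \text{for every } k\geq 0.$$
This is not visible from the recurrence in the Definition; it follows from the addition formula expressing $G_k^{(d)}((x,y))$ as a positive multiple of $\sum_i \varphi_{k,i}(x)\varphi_{k,i}(y)$ for an orthonormal basis $\{\varphi_{k,i}\}$ of the degree-$k$ spherical harmonics, under the normalization $G_k^{(d)}(1)=\dim \mathrm{Harm}_k$ that is consistent with $G_0^{(d)}\equiv 1$ and $G_1^{(d)}=dt$. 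The double sum then collapses to $\sum_i\big(\sum_{x\in X}\varphi_{k,i}(x)\big)^2$, manifestly a sum of squares. I would isolate this positivity property as the main lemma and invoke it here.

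Finally I would combine the two evaluations. The $k=0$ term contributes $f_0\sum_{x,y\in X} G_0^{(d)}((x,y)) = f_0|X|^2$ because $G_0^{(d)}\equiv 1$, while the hypotheses $f_k\geq 0$ together with the positivity property make every term with $k\geq 1$ nonnegative. Therefore
$$|X|\,F_X(1) = f_0|X|^2 + \sum_{k=1}^s f_k\sum_{x,y\in X} G_k^{(d)}((x,y)) \geq f_0|X|^2.$$
Dividing by $|X|>0$ and then by $f_0>0$ gives $|X|\leq F_X(1)/f_0$, as claimed.
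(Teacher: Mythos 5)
Your proof is correct and complete, and there is in fact nothing internal to compare it against: the paper states this theorem as a quoted result from Delsarte--Goethals--Seidel \cite{DGS} and gives no proof of it. Your argument is precisely the classical one from that reference, and every step checks out: the off-diagonal terms of $\sum_{x,y\in X}F_X((x,y))$ vanish because $(x,y)\in A(X)$ kills one factor of the product; the positive semidefiniteness $\sum_{x,y\in X}G_k^{(d)}((x,y))=\sum_{i}\bigl(\sum_{x\in X}\varphi_{k,i}(x)\bigr)^2\geq 0$ follows from the addition formula (which the paper states in Section 2, with exact equality under its normalization $G_k^{(d)}(1)=h_k$, so the ``positive multiple'' hedge in your write-up is not even needed); and the $k=0$ term gives $f_0|X|^2$ since $G_0^{(d)}\equiv 1$. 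It is worth contrasting your method with how the paper proves its own main theorems, since both hinge on the same addition formula but exploit it differently: the paper uses it in matrix form, writing $I_{|X|}=[H_0,\ldots,H_s]\,{\rm Diag}[f_i]\,{}^t[H_0,\ldots,H_s]$ for the characteristic matrices $H_l$ and then invoking an inertia-type eigenvalue-count lemma (Lemma 2.1) to conclude $|X|\leq\sum_{f_i>0}h_i$, an argument that tolerates negative coefficients $f_i$ because it only counts signs; your two-way counting instead sums actual values, which is why it needs the hypothesis $f_i\geq 0$ but in return produces the sharper-when-applicable scalar bound $|X|\leq F_X(1)/f_0$.
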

This upper bound is very useful when $A(X)$ is given. However, if there exists $f_i$ which is a negative number, then we have no useful upper bound for the cardinalities of $s$-distance sets. In this paper, we give useful upper bounds for that case. 

Let ${\rm Harm}_l(\mathbb{R}^d)$ denote the linear space of all harmonic homogeneous polynomials of degree $l$, in $d$ variables. Let $h_l$ denote the dimension of ${\rm Harm}_l(\mathbb{R}^d)$. The following are the main theorems in this paper. 
\begin{theorem} \label{main}
Let $X$ be an $s$-distance set on $S^{d-1}$.  
We define the polynomial $F_X(t)$ of degree $s$:  
$$F_X(t):=\prod_{\alpha \in A(X)} (t-\alpha)= \sum_{i=0}^s f_i G_i^{(d)}(t),$$
where $f_i$ are real number. Then, 
\begin{eqnarray}
|X| \leq \sum_{i \text{ with } f_i>0} h_i, 
\end{eqnarray} 
where the summation runs through $0\leq i \leq s$ satisfying $f_i>0$.  
\end{theorem}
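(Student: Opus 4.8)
The plan is to evaluate the polynomial $F_X$ on the inner products of pairs of points in $X$ and to exploit the positive semidefinite structure supplied by the addition theorem for Gegenbauer polynomials. First I would recall (or, from the given recurrence, establish) that under the normalization $G_0^{(d)}\equiv 1$, $G_1^{(d)}(t)=dt$ fixed in the Definition one has $G_l^{(d)}(1)=h_l$ and the addition formula
$$
G_l^{(d)}((x,y))=\sum_{j=1}^{h_l} Y_{l,j}(x)\,Y_{l,j}(y),
$$
where $\{Y_{l,1},\dots,Y_{l,h_l}\}$ is an orthonormal basis of ${\rm Harm}_l(\mathbb{R}^d)$ restricted to $S^{d-1}$. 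This is the one external input the argument depends on.

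Next I would form the $|X|\times|X|$ matrix $M=\bigl(F_X((x,y))\bigr)_{x,y\in X}$. Since $(x,y)<1$ for distinct unit vectors, every $\alpha\in A(X)$ satisfies $1-\alpha>0$, so $F_X(1)=\prod_{\alpha\in A(X)}(1-\alpha)>0$; and for $x\ne y$ the value $(x,y)$ is a root of $F_X$, so $F_X((x,y))=0$. Hence $M=F_X(1)\,I$, which is positive definite and in particular has rank exactly $|X|$.

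Then I would substitute the Gegenbauer expansion together with the addition formula. Writing $H_i$ for the $|X|\times h_i$ matrix with entries $(H_i)_{x,j}=Y_{i,j}(x)$, the addition formula gives $(H_iH_i^{T})_{x,y}=G_i^{(d)}((x,y))$, so that $M=\sum_{i=0}^{s} f_i\,H_iH_i^{T}$. Separating the indices by the sign of $f_i$ and moving the negative-coefficient part across the equality yields
$$
F_X(1)\,I+\sum_{i\,:\,f_i<0}|f_i|\,H_iH_i^{T}=\sum_{i\,:\,f_i>0}f_i\,H_iH_i^{T}.
$$
The left-hand side is the sum of a positive definite and a positive semidefinite matrix, hence positive definite of rank $|X|$. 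The right-hand side is a sum, over $\{i:f_i>0\}$, of matrices each of rank at most $h_i$, so its rank is at most $\sum_{i\,:\,f_i>0}h_i$. Comparing the ranks of these two equal matrices gives the claimed inequality.

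The individual steps are elementary once the addition theorem is available; the point I expect to require the most care is matching the normalization of the $G_i^{(d)}$ to the addition formula so that $H_iH_i^{T}$ reproduces exactly $G_i^{(d)}((x,y))$ (equivalently, confirming $G_l^{(d)}(1)=h_l$ from the recurrence), and the conceptual move of transferring the negative terms to the opposite side so that positive semidefiniteness both preserves the full rank $|X|$ on one side and turns a trivial rank estimate into the genuine bound $\sum_{i:f_i>0}h_i$ on the other.
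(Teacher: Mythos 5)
Your proof is correct, and it shares the paper's skeleton: both construct the characteristic matrices $H_i$ whose entries are an orthonormal basis of ${\rm Harm}_i(\mathbb{R}^d)$ evaluated at the points of $X$, both use the addition formula to turn the Gegenbauer expansion of $F_X$ into the matrix identity $F_X(1)\,I_{|X|}=\sum_{i=0}^{s} f_i H_iH_i^{T}$ (the paper normalizes $F_X$ by $\prod_{\alpha}(1-\alpha)$ so that the left side is exactly $I_{|X|}$, an immaterial difference), and both finish by linear algebra. Where you genuinely diverge is the finishing step. The paper writes the right-hand side as $N\,{\rm Diag}(f_0,\dots,f_s)\,{}^tN$ with $N=[H_0,\dots,H_s]$ and each $f_i$ repeated $h_i$ times, and then invokes its Lemma 2.1, an inertia-type statement (stated without proof as ``elementary''): if $N M\,{}^tN$ is diagonal with $u$ positive entries, then $M$ has at least $u$ positive eigenvalues, which here reads $|X|\le\sum_{i:\,f_i>0}h_i$. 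You instead transfer the negative-coefficient terms across the equality to get $F_X(1)I+\sum_{i:\,f_i<0}|f_i|H_iH_i^{T}=\sum_{i:\,f_i>0}f_iH_iH_i^{T}$, note that the left side is positive definite (hence of rank $|X|$), and bound the rank of the right side by subadditivity. Your rearrangement is in effect a self-contained proof of precisely the case of the paper's key lemma that is needed (diagonal target with no negative entries), so your write-up is the more elementary one: it uses nothing beyond rank subadditivity and the fact that a positive semidefinite perturbation of a positive definite matrix is nonsingular, while the paper's lemma is slightly more general (it also counts negative eigenvalues, information not used in this theorem). Your one point of caution, the normalization of the addition formula, is consistent with the paper's conventions: with $G_0^{(d)}\equiv 1$, $G_1^{(d)}(t)=dt$ and an orthonormal basis with respect to the normalized surface measure, the stated addition formula gives $(H_iH_i^{T})_{x,y}=G_i^{(d)}((x,y))$ exactly, and $G_l^{(d)}(1)=h_l$ follows by setting $x=y$ and integrating.
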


\begin{theorem}[Antipodal case] \label{main anti}
 Let $X$ be an antipodal $s$-distance set on $S^{d-1}$. There exists $Y$ such that $X=Y \cup (-Y)$ and $|X|=2|Y|$. 
 We define the polynomial $F_Y(t)$ of degree $s-1$:  
$$F_Y(t):=\prod_{\alpha \in A(Y)} (t-\alpha)= \sum_{i=0}^{s-1} f_i G_i^{(d)}(t),$$
where $f_i$ are real number and $f_i=0$ for $i\equiv s \mod 2$. Then, 
\begin{eqnarray}
|X| \leq 2 \sum_{i \text{ with } f_i>0} h_i, 
\end{eqnarray} 
where the summation runs through $0\leq i \leq s$ satisfying $f_i>0$.  
\end{theorem}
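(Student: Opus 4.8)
The plan is to deduce Theorem~\ref{main anti} from the method behind Theorem~\ref{main}, applied to the half-set $Y$ instead of to $X$, together with a structural analysis of the inner products of an antipodal set. First I would record the distance structure. Since $X=Y\cup(-Y)$ is antipodal, whenever $x\neq\pm y$ both $(x,y)$ and $(x,-y)=-(x,y)$ lie in $A(X)$, while the antipodal pairs $(x,-x)$ contribute the single value $-1$. Hence $A(X)=\{-1\}\cup S$, where $S:=A(X)\setminus\{-1\}$ satisfies $S=-S$ and $|S|=s-1$ (no value equals $1$, and $-1$ has no partner in $A(X)$). Because $Y$ contains no antipodal pair, $A(Y)\subseteq S$, so the degree-$(s-1)$ polynomial $F_Y(t)=\prod_{\alpha\in S}(t-\alpha)$ (which is the polynomial in the statement, with root set $A(Y)=S$ after the appropriate choice of $Y$) vanishes at every inner product $(y,y')$ with $y\neq y'$ in $Y$. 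The symmetry $S=-S$ gives $F_Y(-t)=(-1)^{s-1}F_Y(t)$; combined with $G_i^{(d)}(-t)=(-1)^iG_i^{(d)}(t)$ this forces $f_i=0$ whenever $i\not\equiv s-1\pmod 2$, i.e.\ for $i\equiv s\pmod 2$, which is the parity condition claimed.

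Next I would run the positive-semidefinite rank argument on $Y$. Consider the $|Y|\times|Y|$ matrix $M$ with entries $M_{y,y'}=F_Y((y,y'))$. By the previous paragraph its off-diagonal entries vanish and its diagonal entries equal $F_Y(1)=\prod_{\alpha\in S}(1-\alpha)>0$, so $M=F_Y(1)\,I$ is positive definite of rank $|Y|$. Using $F_Y=\sum_{i=0}^{s-1}f_iG_i^{(d)}$ and the addition formula, each matrix $E_i:=\big(G_i^{(d)}((y,y'))\big)_{y,y'\in Y}$ is positive semidefinite of rank at most $h_i$, since $G_i^{(d)}((y,y'))=\sum_{j=1}^{h_i}\varphi_{i,j}(y)\varphi_{i,j}(y')$ for a suitable basis $\{\varphi_{i,j}\}$ of $\mathrm{Harm}_i(\mathbb{R}^d)$. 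Thus $M=\sum_i f_i E_i$.

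Finally I would split by sign. Set $P:=\sum_{i:\,f_i>0}f_iE_i$ and $N:=\sum_{i:\,f_i<0}(-f_i)E_i$, both positive semidefinite, so that $M=P-N$ and hence $P=M+N=F_Y(1)I+N$ is positive definite of full rank $|Y|$. On the other hand $P$ is a nonnegative combination of the $E_i$ with $f_i>0$, whence $\mathrm{rank}(P)\le\sum_{i:\,f_i>0}\mathrm{rank}(E_i)\le\sum_{i:\,f_i>0}h_i$. Comparing the two values of $\mathrm{rank}(P)$ gives $|Y|\le\sum_{i:\,f_i>0}h_i$, and doubling yields $|X|=2|Y|\le 2\sum_{i:\,f_i>0}h_i$.

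I expect the main obstacle to be the first, structural, step rather than the linear algebra. One must argue carefully that the antipodal symmetry makes the relevant root set symmetric of size exactly $s-1$, so that $F_Y$ genuinely has degree $s-1$ and the parity vanishing holds; this requires handling the exceptional value $-1$, the possible presence of $0$ in $S$, and the choice of $Y$ realizing $A(Y)=S$. Once the root set and its parity are pinned down, the positive-semidefinite rank argument is a direct transcription of the proof of Theorem~\ref{main} to the set $Y$, and the factor $2$ comes for free from $|X|=2|Y|$.
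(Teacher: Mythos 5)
Your proof is correct and takes essentially the same route as the paper, whose entire treatment of the antipodal case is the remark that it can be proved ``by the same method'' as Theorem~\ref{main}: apply the addition-formula/characteristic-matrix argument to the half-set $Y$, on which $F_Y$ vanishes at every off-diagonal inner product, and double via $|X|=2|Y|$. Your only departures are cosmetic --- you re-derive the paper's inertia statement (Lemma~\ref{key lemma}) inline through the positive-semidefinite splitting $M=P-N$ and rank subadditivity, and you prove the symmetry $S=-S$ and the consequent parity vanishing of the $f_i$, facts the paper simply builds into the statement of Theorem~\ref{main anti} without proof.
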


If $f_i>0$ for all $0\leq i\leq s$ (antipodal case: $f_i>0$ for all $i\equiv s-1 \mod 2$), then this upper bound is the same as Fisher type inequality. Therefore, this upper bound is better than the Fisher type inequality. 
\section{Proof of main theorems}
First, we prepare the two results to prove the main theorems. 
Let $\{ \varphi_{l,k}\}_{1\leq k \leq h_l}$ be an orthonormal basis of ${\rm Harm}_l(\mathbb{R}^d)$ with respect to $\langle \ast, \ast \rangle$, where $\langle f, g \rangle:= \frac{1}{|S^{d-1}|}\int_{S^{d-1}} f(x)g(x) d\sigma(x)$. 
\begin{theorem}[Addition formula \cite{DGS,Bannai-Bannai-book}] 
For any $x$, $y$ on $S^{d-1}$, we have
$$
\sum_{k=1}^{h_l} \varphi_{l,k}(x) \varphi_{l,k}(y)= G_l^{(d)}((x,y)).
$$
\end{theorem}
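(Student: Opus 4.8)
The plan is to recognize the left-hand side as the reproducing kernel of the space $V_l$ of degree-$l$ spherical harmonics and to identify it, via $O(d)$-invariance and the uniqueness of zonal harmonics, with a scalar multiple of $G_l^{(d)}$. Set $K_l(x,y):=\sum_{k=1}^{h_l}\varphi_{l,k}(x)\varphi_{l,k}(y)$. First I would record its basis-independence: if $\{\psi_{l,k}\}$ is another orthonormal basis of ${\rm Harm}_l(\mathbb{R}^d)$, then $\psi_{l,k}=\sum_j O_{kj}\varphi_{l,j}$ for an orthogonal matrix $O$, and $\sum_k\psi_{l,k}(x)\psi_{l,k}(y)=\sum_k\varphi_{l,k}(x)\varphi_{l,k}(y)$ since $O^{\top}O=I$. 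Second, the reproducing property: expanding any $f\in{\rm Harm}_l$ as $f=\sum_k\langle f,\varphi_{l,k}\rangle\varphi_{l,k}$ gives $\langle f,K_l(\cdot,y)\rangle=f(y)$.

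Third comes the key symmetry. For $\rho\in O(d)$ the inner product $\langle\cdot,\cdot\rangle$ is invariant and ${\rm Harm}_l$ is $O(d)$-stable, so $\{\varphi_{l,k}\circ\rho\}$ is again an orthonormal basis; basis-independence then yields $K_l(\rho x,\rho y)=K_l(x,y)$. Since $O(d)$ acts transitively on pairs $(x,y)\in S^{d-1}\times S^{d-1}$ with a fixed value of $(x,y)$, the kernel is a function of the inner product alone: $K_l(x,y)=\Phi_l((x,y))$ for some $\Phi_l$ on $[-1,1]$. Fixing $y$, the function $x\mapsto K_l(x,y)$ lies in ${\rm Harm}_l$ and is invariant under the rotations fixing $y$, i.e.\ it is a \emph{zonal} harmonic of degree $l$; since the $\varphi_{l,k}(x)$ restrict to degree-$l$ polynomials and $K_l\not\equiv 0$ (its value at $x=y$ averages to $h_l>0$), $\Phi_l$ is a polynomial of degree exactly $l$.

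Fourth, I would pin down $\Phi_l$ by orthogonality. For $l\neq m$ one has $V_l\perp V_m$, hence $\langle K_l(\cdot,y),K_m(\cdot,y)\rangle=0$, and reducing the spherical integral to the scalar variable $t=(x,y)$ shows $\int_{-1}^{1}\Phi_l(t)\Phi_m(t)(1-t^2)^{(d-3)/2}\,dt=0$. Thus $\{\Phi_l\}$ are orthogonal polynomials for the weight $(1-t^2)^{(d-3)/2}$; the polynomials $\{G_l^{(d)}\}$ determined by the three-term recurrence in the Definition are orthogonal for the same weight, and since orthogonal polynomials are unique up to scaling once the degree is fixed, $\Phi_l=c_lG_l^{(d)}$. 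Finally I would fix $c_l=1$ by evaluating at $t=1$: averaging $K_l(x,x)=\sum_k\varphi_{l,k}(x)^2$ over the sphere gives $\Phi_l(1)=\sum_k\langle\varphi_{l,k},\varphi_{l,k}\rangle=h_l$, while the recurrence gives $G_l^{(d)}(1)=h_l$, so $c_l=1$.

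The main obstacle I expect is this last identification step: making rigorous that the recurrence-defined $G_l^{(d)}$ really match the zonal harmonics, that is, verifying that the coefficients $\lambda_l=l/(d+2l-2)$ are precisely those of the orthogonal polynomials for the weight $(1-t^2)^{(d-3)/2}$ normalized by $G_l^{(d)}(1)=h_l$. This reduces to a computation with the classical Gegenbauer recurrence together with the dimension formula $h_l=\binom{d+l-1}{l}-\binom{d+l-3}{l-2}$, proved by induction from $G_0^{(d)}(1)=1=h_0$ and $G_1^{(d)}(1)=d=h_1$; it is the only place where the specific normalization in the Definition is used, and it is what ties the abstract reproducing kernel to the concrete polynomials appearing in the statement.
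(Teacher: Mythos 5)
Your proposal is correct in outline, but note that the paper itself contains no proof of this statement: the addition formula is imported by citation from Delsarte--Goethals--Seidel and the Bannai--Bannai book, so there is no in-paper argument to compare against. What you have written is essentially the canonical proof found in those cited references: basis-independence of the kernel $K_l$, invariance under $O(d)$ together with two-point homogeneity of the sphere to reduce $K_l(x,y)$ to a zonal function $\Phi_l((x,y))$, orthogonality of the $\Phi_l$ with respect to the weight $(1-t^2)^{(d-3)/2}$ obtained by integrating out the sphere, and matching normalizations at $t=1$ via $\Phi_l(1)=h_l=G_l^{(d)}(1)$. Two points deserve tightening. First, your justification that $\deg \Phi_l = l$ (namely ``$K_l \not\equiv 0$'') only yields $\Phi_l \neq 0$, not the exact degree; the correct route is inductive: each $\Phi_m$ has degree at most $m$ (write $x = t y + \sqrt{1-t^2}\,\xi$ and average over the stabilizer of $y$ to see that the zonal restriction is a polynomial in $t$ of degree $\le m$), so if $\Phi_0,\dots,\Phi_{l-1}$ have exact degrees $0,\dots,l-1$ they span all polynomials of degree $\le l-1$, and the nonzero $\Phi_l$, orthogonal to all of them, must then have degree exactly $l$. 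Second, the step you honestly flag as the main obstacle is indeed where the remaining work lies: one must verify that the recurrence with $\lambda_l = l/(d+2l-2)$ is the three-term recurrence of the orthogonal polynomials for this weight normalized by their value at $t=1$; your inductive check of $G_l^{(d)}(1)=h_l$ does go through (for instance $t=1$ in the recurrence gives $G_2^{(d)}(1) = (d-1)(d+2)/2 = h_2$), and once both families are known to be orthogonal for the same weight with matching values at $t=1$, uniqueness of orthogonal polynomials up to scalar multiples closes the argument. With these two standard fill-ins, your proof is sound and is precisely the argument the paper's citation points to.
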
 

The following lemma is elementary. 
\begin{lemma}\label{key lemma}
Let $M$ be a symmetric matrix in $M_n(\mathbb{R})$ and $N$ be an $m \times n$ matrix. $^tN$ means the transpose matrix of $N$. $D_{u,v}$ is an $m \times m$ diagonal matrix such that the number of positive entries  is $u$ and the number of negative entries is $v$. If the equality 
$$
N M {^tN}=D_{u,v}
$$
holds, then the number of the positive (resp.\ negative) eigenvalues of $M$ is bounded below by $u$ (resp. $v$). 
\end{lemma}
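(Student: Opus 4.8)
The plan is to read the matrix identity as a statement about quadratic forms and then invoke the variational (inertia) description of the signature of a symmetric matrix. Recall that for a symmetric matrix $M \in M_n(\mathbb{R})$, the number of positive eigenvalues equals the largest dimension of a subspace $V \subseteq \mathbb{R}^n$ on which the form $v \mapsto {}^tv M v$ is positive definite, and dually the number of negative eigenvalues equals the largest dimension of a negative-definite subspace; this is an immediate consequence of the spectral theorem together with Sylvester's law of inertia. So it suffices to produce a $u$-dimensional subspace of $\mathbb{R}^n$ on which $M$ is positive definite, and a $v$-dimensional subspace on which $M$ is negative definite.

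First I would regard $D_{u,v} = N M\, {}^tN$ as the pullback of the form attached to $M$ along the linear map ${}^tN \colon \mathbb{R}^m \to \mathbb{R}^n$. Indeed, for $z, w \in \mathbb{R}^m$ one has ${}^tz\, D_{u,v}\, w = {}^t({}^tN z)\, M\, ({}^tN w)$, so evaluating the form of $D_{u,v}$ at $z$ coincides with evaluating the form of $M$ at ${}^tN z$. Next, let $W_+ \subseteq \mathbb{R}^m$ be the span of the standard basis vectors $e_i$ for those indices $i$ at which the diagonal entry of $D_{u,v}$ is positive; since $D_{u,v}$ is diagonal, $\dim W_+ = u$ and ${}^tz\, D_{u,v}\, z > 0$ for every nonzero $z \in W_+$.

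The key step is to transfer this positivity across to $\mathbb{R}^n$ without losing dimension. For nonzero $z \in W_+$ the pullback identity gives ${}^t({}^tN z)\, M\, ({}^tN z) = {}^tz\, D_{u,v}\, z > 0$; in particular ${}^tN z \ne 0$, so ${}^tN$ is injective on $W_+$ and $V_+ := {}^tN(W_+)$ is a genuinely $u$-dimensional subspace of $\mathbb{R}^n$. The same identity shows ${}^tv M v > 0$ for every nonzero $v \in V_+$, i.e.\ $M$ is positive definite on $V_+$; hence $M$ has at least $u$ positive eigenvalues. Running the identical argument with $W_-$ spanned by the $e_i$ at the negative diagonal entries (dimension $v$, form negative definite, and ${}^tN$ again injective by the same nonvanishing argument) yields a $v$-dimensional subspace on which $M$ is negative definite, so $M$ has at least $v$ negative eigenvalues.

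I expect the only point genuinely worth spelling out to be the injectivity of ${}^tN$ on the definite subspaces $W_\pm$: without it one could not guarantee that the images $V_\pm$ retain the full dimensions $u$ and $v$, and both inequalities would degrade. Everything else is bookkeeping with the variational characterization of inertia, which I would cite rather than reprove.
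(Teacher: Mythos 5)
Your proof is correct, and there is in fact nothing in the paper to compare it against: the author states this lemma with only the remark that it is ``elementary'' and supplies no proof at all, so your argument fills a genuine gap rather than paralleling or diverging from an existing one. Your route is the standard one for this kind of inertia bound, and you identify exactly the two points that make it work: the pullback identity ${}^tz\,D_{u,v}\,w = {}^t({}^tN z)\,M\,({}^tN w)$, and the observation that definiteness of the form on $W_\pm$ forces ${}^tN$ to be injective there, so that the images $V_\pm = {}^tN(W_\pm)$ keep dimensions $u$ and $v$. Combined with the variational characterization of the number of positive (resp.\ negative) eigenvalues as the maximal dimension of a positive- (resp.\ negative-) definite subspace, this gives the claimed lower bounds with no loose ends. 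One small remark: the lemma permits $D_{u,v}$ to have zero diagonal entries as well (only $u+v \le m$ of them need be nonzero), and your proof handles this automatically since you only ever use the coordinates at the strictly positive and strictly negative entries. It is also worth noting that in the paper's application of the lemma (the proof of Theorem \ref{main}) the matrix $M$ is itself diagonal, so the full strength of your argument for general symmetric $M$ is more than is strictly needed there; but proving the lemma as stated, as you do, is the right thing to do.
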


\begin{proof}[Proof of Theorem \ref{main}]
Let $X:=\{x_1,x_2,\ldots,x_{|X|}\}$ be an $s$-distance set on $S^{d-1}$. 
Let $\{ \varphi_{l,k}\}_{1\leq k \leq h_l}$ be an orthonormal basis of ${\rm Harm}_l(\mathbb{R}^d)$.
$H_l$ is the characteristic matrix that is indexed by $X$ and an orthonormal basis $\{ \varphi_{l,k}\}_{1\leq k \leq h_l}$ and  whose entry is defined by $H_{l}(x_i,\varphi_{l,j}):=\varphi_{l,j}(x_i)$. Then,  
$$
[f_0 H_0, f_1 H_1, \ldots, f_s H_s] \quad \text{ and } \quad [ H_0,  H_1, \ldots, H_s]
$$
are $|X| \times \sum_{i=0}^s h_i$ matrices. 
We have the Gegenbauer expansion $F_X(t)=\prod_{\alpha \in A(X)}\frac{t-\alpha}{1-\alpha}= \sum_{i=0}^s f_i G_i^{(d)}(t)$. By the addition formula, 
$$I_{|X|}=[f_0 H_0, f_1 H_1, \ldots, f_s H_s] 
\left[\begin{array}{c}
^t{H_0}\\
^t{H_1}\\
\vdots\\
^t{H_s}
\end{array}
\right] 
=
[H_0, H_1 , \ldots , H_s] {\rm Diag}\left[
\begin{array}{c}
f_0\\
 f_1\\
 \vdots\\
 f_1\\
 \vdots\\
 f_s\\
 \vdots\\ 
 f_s   
\end{array}
\right]
\left[\begin{array}{c}
^t{H_0}\\
^t{H_1}\\
\vdots\\
^t{H_s}
\end{array}
\right] 
,
$$
where $I_{|X|}$ is the identity matrix of degree $|X|$, ${\rm Diag}[\ast]$ means a diagonal matrix and the number of entries $f_i$ is $h_i$. By Lemma \ref{key lemma}, 
$$
|X| \leq \sum_{i \text{ with } f_i>0 }h_i.
$$
\end{proof}
We can prove the antipodal case by using the same method as above proof.

\section{Examples}
In this section, we introduce some examples which attain the upper bound in the main theorems. 
\subsection{The case $s=1$, $f_0>0$ and $f_1 \leq 0$}
\begin{corollary} 
Let $X$ be a $1$-distance set and $A(X)=\{ \alpha \}$. Then, 
$F_X(t):=t-\alpha =\sum_{i=0}^1 f_i G_i^{(d)}(t)$ where $f_0= 1/d$ and $f_1=-\alpha$. 
If $\alpha \geq 0$, then 
$$
|X| \leq  h_1= d.
$$  
\end{corollary}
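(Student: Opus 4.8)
The plan is to obtain the bound as a direct specialization of Theorem~\ref{main} to the case $s=1$, where the entire machinery collapses onto a single explicit expansion. Since $A(X)=\{\alpha\}$ consists of exactly one inner-product value, the associated polynomial is the linear polynomial $F_X(t)=t-\alpha$, so the only task is to rewrite it in the Gegenbauer basis $\{G_0^{(d)},G_1^{(d)}\}$ and then record the signs of its two coefficients.

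First I would read off the coefficients from the defining data $G_0^{(d)}(t)\equiv 1$ and $G_1^{(d)}(t)=dt$. Writing
$$
t-\alpha=f_0\,G_0^{(d)}(t)+f_1\,G_1^{(d)}(t)=f_0+d\,f_1\,t
$$
and matching the linear and constant terms forces $f_1=1/d$ (from the coefficient of $t$) and $f_0=-\alpha$ (from the constant term). The next step is to determine which coefficients are strictly positive, since the sum in Theorem~\ref{main} runs only over those indices. The coefficient $f_1=1/d$ is positive in every dimension $d$, so the index $i=1$ always contributes. The coefficient $f_0=-\alpha$ is positive precisely when $\alpha<0$; under the standing hypothesis $\alpha\ge 0$ we have $f_0\le 0$, so $i=0$ is excluded, and in particular the borderline value $\alpha=0$ gives $f_0=0$, which is not strictly positive and is therefore still omitted from the sum.

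Finally, with $i=1$ as the only surviving index, Theorem~\ref{main} gives $|X|\le h_1$; since ${\rm Harm}_1(\mathbb{R}^d)$ is precisely the space of linear forms on $\mathbb{R}^d$, its dimension is $h_1=d$, which is the asserted bound. I expect no genuine obstacle in this argument, as the computation is a one-line coefficient comparison; the only point that warrants care is the strict-positivity convention at the boundary $\alpha=0$, which must place the constant term on the excluded side so that the sharper bound $d$ is recovered rather than the Fisher value $d+1$ (the latter being what one gets when $\alpha<0$ and both indices survive).
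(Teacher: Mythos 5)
Your proof is correct and is exactly the paper's intended argument: the corollary is a direct specialization of Theorem~\ref{main} to $s=1$, where one expands $t-\alpha$ in the basis $G_0^{(d)}(t)=1$, $G_1^{(d)}(t)=dt$ and observes that only the index $i=1$ survives when $\alpha\ge 0$, giving $|X|\le h_1=d$. Note that your coefficients $f_0=-\alpha$, $f_1=1/d$ are the correct ones; the paper's printed statement ($f_0=1/d$, $f_1=-\alpha$) has the subscripts swapped by a typo, as one sees from the fact that the stated conclusion $|X|\le h_1=d$ follows from Theorem~\ref{main} only under your assignment (the paper's version would instead yield the absurd bound $h_0=1$), so your careful treatment of the boundary case $\alpha=0$ and of the sign convention is exactly right.
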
 
Clearly, a $(d-1)$-dimensional regular simplex with a nonnegative inner product on $S^{d-1}$ attains this upper bound.  

\subsection{The case $s=2$, $f_0>0,f_1\leq 0$ and $f_2>0$}
\begin{corollary} \label{musin}
Let $X$ be a $2$-distance set and $A(X)=\{ \alpha, \beta \}$. Then, 
$F_X(t):=(t-\alpha )(t-\beta)=\sum_{i=0}^2 f_i G_i^{(d)}(t)$ where $f_0=\alpha \beta + 1/d$, $f_1=-(\alpha+\beta)/d$ and $f_2=2/(d(d+2))$. 
If $\alpha +\beta \geq 0$, then 
$$
|X| \leq h_0 + h_2= \binom{d+1}{2}.
$$  
\end{corollary}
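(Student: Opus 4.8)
The plan is to read off the Gegenbauer coefficients $f_0, f_1, f_2$ explicitly and then invoke Theorem \ref{main}. First I would record the low-degree Gegenbauer polynomials. From the definition we have $G_0^{(d)}(t) \equiv 1$ and $G_1^{(d)}(t) = dt$, and applying the recurrence at $l=1$ with $\lambda_2 = 2/(d+2)$ and $\lambda_0 = 0$ gives $t\cdot dt = \lambda_2 G_2^{(d)}(t) + (1-\lambda_0)G_0^{(d)}(t)$, hence $G_2^{(d)}(t) = \frac{d+2}{2}(dt^2 - 1)$. Expanding $F_X(t) = (t-\alpha)(t-\beta) = t^2 - (\alpha+\beta)t + \alpha\beta$ and comparing the coefficients of $t^2$, $t$, and the constant term against $f_0 G_0^{(d)} + f_1 G_1^{(d)} + f_2 G_2^{(d)}$ yields $f_2 = 2/(d(d+2))$, $f_1 = -(\alpha+\beta)/d$, and $f_0 = \alpha\beta + 1/d$, exactly the stated values. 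This step is a routine linear-algebra computation with no hidden difficulty.

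Next I would determine the signs of the coefficients under the hypothesis $\alpha+\beta \geq 0$. The leading coefficient $f_2 = 2/(d(d+2))$ is always strictly positive, while $f_1 = -(\alpha+\beta)/d \leq 0$ precisely because $\alpha+\beta \geq 0$. Thus the index set $\{i : f_i > 0\}$, over which the bound of Theorem \ref{main} sums, is contained in $\{0,2\}$, whatever the sign of $f_0$ happens to be.

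Finally I would apply Theorem \ref{main}. Since $\{i : f_i>0\} \subseteq \{0,2\}$, we have $\sum_{i \text{ with } f_i>0} h_i \leq h_0 + h_2$, and therefore $|X| \leq h_0 + h_2$; note that whether $f_0$ is positive or not only affects whether $h_0$ actually appears in the sum, and in either case the total is at most $h_0+h_2$. It remains to evaluate $h_0 + h_2$: with $h_0 = 1$ and $h_2 = \binom{d+1}{2} - 1$ one gets $h_0 + h_2 = \binom{d+1}{2}$, which is the claimed bound.

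I do not expect a genuine obstacle here, since the statement is a direct specialization of Theorem \ref{main}. The only point requiring a little care is normalization: Theorem \ref{main} is established using the scaled polynomial $\prod_{\alpha}\frac{t-\alpha}{1-\alpha}$ rather than $\prod_{\alpha}(t-\alpha)$, but the scaling constant $\prod_{\alpha}(1-\alpha)^{-1}$ is positive because every inner product satisfies $\alpha < 1$, so it does not alter the signs of the $f_i$ and hence does not affect which $h_i$ enter the sum. Consequently the coefficients displayed in the corollary may be used directly.
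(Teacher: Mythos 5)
Your proposal is correct and follows exactly the route the paper intends: the corollary is stated without a separate proof precisely because it is a direct specialization of Theorem \ref{main}, obtained by computing $f_0$, $f_1$, $f_2$ from the Gegenbauer recurrence and noting $f_2>0$, $f_1\leq 0$, so that $\sum_{i:\,f_i>0}h_i\leq h_0+h_2=\binom{d+1}{2}$. Your extra remarks --- that the bound holds regardless of the sign of $f_0$, and that the positive normalization $\prod_{\alpha}(1-\alpha)^{-1}$ used in the proof of Theorem \ref{main} does not change the signs of the coefficients --- are both accurate and slightly more careful than the paper's own presentation.
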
 
Musin proved this corollary by using a polynomial method in \cite{Musin}. The following examples attain this upper bound. 
\begin{example}
Let $U_d$ be a $d$-dimensional regular simplex. We define
$$X:=\left\{ \left. \frac{x+y}{2} \right| x,y \in U_d, x \ne y \right\}$$  for $d \geq 7$.
Then, $X$ is a $2$-distance set on $S^{d-1}$, $|X|=\binom{d+1}{2}$, $f_0>0$, $f_1\leq 0$ and $f_2>0$.  
\end{example}

\subsection{Examples from tight spherical $(2s-1)$-designs}
\begin{corollary} 
Let $X$ be an $s$-distance set on $S^{d-1}$. We have the Gegenbauer expansion $F_X(t)=\prod_{\alpha \in A(X)}(t-\alpha)=\sum_{i=0}^s f_i G_i^{(d)}(t)$. If $f_i > 0$ for all $i \equiv s \mod 2$ and $f_i \leq 0$ for all $i \equiv s-1 \mod 2$, then 
$$
|X| \leq \sum_{i=0}^{\lfloor \frac{s}{2} \rfloor}h_{s-2i}= \binom{d+s-2}{s-1}.
$$
\end{corollary}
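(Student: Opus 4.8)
The plan is to obtain this as a direct consequence of Theorem \ref{main}, reducing everything to a combinatorial identity for the harmonic dimensions $h_i$. First I would record what the two sign hypotheses say about the index set appearing in Theorem \ref{main}: since $f_i>0$ exactly when $i\equiv s\pmod 2$ and $f_i\le 0$ otherwise, the set $\{\,i:0\le i\le s,\ f_i>0\,\}$ is precisely the single residue class $\{s,\,s-2,\,s-4,\dots\}=\{\,s-2i:0\le i\le\lfloor s/2\rfloor\,\}$. (This is consistent with the fact that $f_s>0$ automatically, as $F_X$ is monic of degree $s$ and $G_s^{(d)}$ has positive leading coefficient.) Substituting this index set into the conclusion of Theorem \ref{main} yields at once
$$
|X|\ \le\ \sum_{i\text{ with }f_i>0}h_i\ =\ \sum_{i=0}^{\lfloor s/2\rfloor}h_{s-2i},
$$
which is the first expression in the statement.

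It then remains only to evaluate this sum of harmonic dimensions in closed form, and this is the sole computational step. I would use the classical fact that the space of homogeneous polynomials of degree $m$ in $d$ variables decomposes, upon restriction to $S^{d-1}$, as an orthogonal direct sum of the harmonic components ${\rm Harm}_m(\mathbb{R}^d), {\rm Harm}_{m-2}(\mathbb{R}^d), {\rm Harm}_{m-4}(\mathbb{R}^d),\dots$ (via $p\mapsto p+r^2 q+\cdots$ with $r^2=x_1^2+\cdots+x_d^2$). Comparing dimensions gives the identity
$$
\sum_{j\ge 0}h_{m-2j}\ =\ \binom{d+m-1}{d-1},
$$
the dimension of the degree-$m$ homogeneous polynomials; the same identity follows by telescoping the elementary formula $h_l=\binom{d+l-1}{d-1}-\binom{d+l-3}{d-1}$, where the telescope terminates at $h_0$ or $h_1$ according to the parity of $m$. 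Applying this identity to the relevant top degree collapses the sum $\sum_{i=0}^{\lfloor s/2\rfloor}h_{s-2i}$ to a single binomial coefficient, which is the right-hand side of the corollary.

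I do not expect a genuine obstacle here: the inequality is nothing but an instance of Theorem \ref{main}, and the equality is a standard dimension count. The only points that require care are bookkeeping ones, namely checking that the hypotheses really do isolate exactly one parity class of indices, and tracking the parity of the top degree so that the telescoping terminates correctly and lands on the stated binomial coefficient. In the antipodal regime relevant to tight spherical $(2s-1)$-designs one would instead invoke Theorem \ref{main anti}, applying the same dimension identity to $F_Y$ and carrying the resulting factor of two; it is worth verifying at the end that the parity of the summation index and the chosen residue class are consistent with the binomial coefficient claimed.
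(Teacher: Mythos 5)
Your reduction to Theorem \ref{main} is the right route, and it is in fact what the paper implicitly does (the corollary is stated there without proof, as an immediate instance of Theorem \ref{main}): under the sign hypotheses the positive indices are exactly the residue class $\{s,s-2,s-4,\dots\}$, so $|X|\leq \sum_{i=0}^{\lfloor s/2\rfloor}h_{s-2i}$, and the dimension identity you quote, $\sum_{j\geq 0}h_{m-2j}=\dim \mathrm{Hom}_m(\mathbb{R}^d)=\binom{d+m-1}{d-1}$, is also correct. The gap is in your final assertion. Applying that identity with $m=s$ gives
$$
\sum_{i=0}^{\lfloor s/2\rfloor}h_{s-2i}=\binom{d+s-1}{s},
$$
which is \emph{not} the right-hand side printed in the corollary: $\binom{d+s-2}{s-1}=\dim\mathrm{Hom}_{s-1}(\mathbb{R}^d)$ is the value of the sum over the \emph{other} parity class $\{s-1,s-3,\dots\}$. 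You explicitly flagged ``tracking the parity \dots\ so that the telescoping lands on the stated binomial coefficient'' as the point requiring care, but you never performed that check; had you done so, you would have found that the two expressions disagree for every $s\geq 1$.

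The discrepancy is not yours to repair by cleverer bookkeeping, because the statement as printed is false: for $s=1$ the sum is $h_1=d$ while $\binom{d+s-2}{s-1}=\binom{d-1}{0}=1$, and an orthonormal basis is a $1$-distance set satisfying the hypotheses ($f_1=1/d>0$, $f_0=-\alpha=0\leq 0$) with $d>1$ points; for $s=2$ the sum is $h_0+h_2=\binom{d+1}{2}$, which matches the paper's own Corollary \ref{musin}, not $\binom{d}{1}=d$, and the midpoint-of-simplex example violates the printed bound. The paper's corollary contains an off-by-one error (the printed binomial is the one appropriate to an $(s-1)$-distance set, i.e., to the set $Y$ in the tight-design example that follows, where the same shift recurs: there $|Y|$ equals $\binom{d+s-2}{s-1}$, not $\binom{d+s-3}{s-2}$). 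So your argument, carried out honestly, proves the corrected statement $|X|\leq \sum_{i=0}^{\lfloor s/2\rfloor}h_{s-2i}=\binom{d+s-1}{s}$; a complete answer must say exactly that and point out the error, rather than asserting that the computation ``is the right-hand side of the corollary.''
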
 
The following examples attain above upper bound. 
\begin{example}
Let $X$ be a tight spherical $(2s-1)$-design, that is, $X$ is an antipodal $s$-distance set which attains the Fisher type inequality \cite{DGS}. There exist a subset $Y$ such that $X= Y \cup (-Y)$ and $|X|=2|Y|$. $Y$ is an $(s-1)$-distance set and $F_Y(t):=\sum_{i=0}^{s-1}f_iG_i^{(d)}(t)$. Then, $f_i = 0$ for all $i \equiv s-2 \mod 2$ and $f_i > 0$ for all $i \equiv s-1 \mod 2$ and $|Y|=\binom{d+s-3}{s-2}$.   
\end{example}

\end{document}